\newtheorem{lemma}{Lemma}
\newtheorem{thm}{Theorem}
\newtheorem{prop}{Proposition}
\newtheorem{dfn}{Definition}
\newcommand{\hs}{\hspace{1pt}}
\newcommand{\hsp}{\hspace{-1pt}}
\newcommand{\rt}{\triangleright}
\newcommand{\Lq}{L}
\newcommand{\Mq}{M}
\newcommand{\dom}{\mathrm{Dom}(D)}
\newcommand{\SU}{{\mathrm{SU}}}
\def\C{{\mathbb C}}
\def\N{{\mathbb N}}
\def\R{{\mathbb R}}
\newcommand{\A}{\mathcal{A}}
\newcommand{\B}{\mathcal{B}}
\newcommand{\U}{\mathcal{U}}  \newcommand{\Uq}{\mathcal{U}}
\newcommand{\Hi}{\mathcal{H}}
\renewcommand{\O}{\mathcal{O}}
\newcommand{\M}{\mathcal{M}}
\newcommand{\re}{\mathrm{Re}\,}
\renewcommand{\S}{\mathcal{H}}
\newcommand{\pa}{\k{a}}
\begin{document}
\title[The
fundamental Hochschild class on the
Podle\'s sphere]{A residue formula for the
fundamental Hochschild class on the
Podle\'s sphere}
\author[Ulrich Kr\"ahmer]{Ulrich Kr\"ahmer}
\email{ukraehmer@maths.gla.ac.uk}
\address{University of Glasgow,
Dept.~of Mathematics, University 
Gardens, Glasgow G12 8QW, Scotland}
\author[Elmar Wagner]{Elmar Wagner}
\email{elmar@ifm.umich.mx}
\address{Universidad Michoacana de San Nicol\'as de Hidalgo, 
Instituto de F\'isica y Matem\'aticas,  Edificio C-3, Cd.~Universitaria, 58040 Morelia, Michoacan, M\'exico }
\begin{abstract}
The fundamental Hochschild cohomology
class of the standard Podle\'s quantum
sphere is expressed in terms of the spectral triple
of D\pa browski and Sitarz by
 means of a residue formula.    
\end{abstract}
\maketitle

\section{Introduction}
In the last decade 
many contributions have enhanced the understanding of
how quantum groups and their homogeneous
spaces can be studied in terms of
spectral triples, see e.g.~
\cite{cp,connes1,triesteCP2,trieste,ds,uli,mnw,nt,voigt,sw,W}
and the references therein. But still
some basic questions remain
untouched, e.g.~in how far spectral
triples generate the
fundamental Hochschild cohomology class of
the underlying algebra. This is what we investigate
here for the standard  
Podle\'s quantum sphere \cite{Po}.

To be more precise, the
coordinate ring $\A=\O(\mathrm{S}^2_q)$ is known
to satisfy
Poincar\'e duality in Hochschild
(co)homology, that is, we have
\begin{equation}\label{pd}
		  H^i(\A,\M) \simeq
		  H_{\mathrm{dim}(\A)-i}(\A,\omega
		  \otimes_\A \M),
\end{equation} 
where $\M$ is an $\A$-bimodule and
$\omega=H^{\mathrm{dim}(\A)}(\A,\A
\otimes \A^\mathrm{op})$
\cite{uliisrael}. In the concrete case
of the standard Podle\'s sphere we have $\mathrm{dim} (\A)=2$ and
$\omega \simeq {}_\sigma \A$,
the bimodule obtained from $\A$ by deforming the
canonical left $\A$-action to $a
\triangleright b:=\sigma (a)b$ for some
noninner automorphism $\sigma \in
\mathrm{Aut}(\A)$ [ibid.].
Thus $\A$ is not a
Calabi-Yau algebra (i.e.~$\A \not\simeq \omega$) and
hence the fundamental Hochschild homology class
that corresponds under the isomorphism (\ref{pd})
to $1 \in H^0(\A,\A)$ belongs to
$H_2(\A,{}_\sigma \A)$.

So when switching to the dual picture of
functionals on the canonical
Hochschild 
complex, there is a cocycle
$\varphi : \A^{\otimes 3} \rightarrow
		  \mathbb{C}$,
$$
		  \varphi (a_0a_1,a_2,a_3)
		  -\varphi (a_0,a_1a_2,a_3)
		  +\varphi (a_0,a_1,a_2a_3)
		  -\varphi (\sigma (a_3)a_0,a_1,a_2)=0
$$
whose cohomology class in
$(H_2(\A,{}_\sigma
\A))^* \simeq H^2(\A,({}_\sigma \A)^*)$
is canonically determined by $\A$ (up to
multiplication by invertible elements in
the centre of $\A$ which in our example
consists only of the scalars), and it is
natural to ask whether this class that we refer to as the fundamental Hochschild cohomology class of $\A$ can be
expressed in terms of a spectral triple
over $\A$. Our main result is that this
is indeed the case:
\begin{thm}\label{main}
Let $q \in (0,1)$, $(\A,\Hi,D,\gamma)$ be
the ${\Uq}_q(\mathfrak{su}(2))$-equi\-va\-riant even spectral triple over the standard
Podle\'s quantum sphere con\-struc\-ted by
D\pa browski and Sitarz 
 \cite{ds}, $K$ be
 the standard group-like generator of
 ${\Uq}_q(\mathfrak{su}(2))$, and
 $a_0,a_1,a_2 \in \A$. Then we have:
\begin{enumerate}
\item The operator
$
\gamma a_0[D,a_1][D,a_2]K^{-2}|D|^{-z}
$
is for $\re z>2$ of trace class
and the function 
$$
		  \mathrm{tr}_\Hi(\gamma
		a_0[D,a_1][D,a_2]K^{-2}|D|^{-z})
$$ 
has a  meromorphic continuation to 
$\{z \in \mathbb{C}\,|\,\re z>1\}$
with a pole at
 $z=2$
of order at most 1. 
\\[-8pt]
\item The residue
$$
		  \varphi
 (a_0,a_1,a_2):=\underset{z=2}{\mathrm{Res}}\,
 		  \mathrm{tr}_\Hi(\gamma a_0[D,a_1][D,a_2]K^{-2}|D|^{-z})
$$
defines a Hochschild cocycle that represents 
the fundamental Hochschild cohomology class of $\A$.
\end{enumerate} 
\end{thm}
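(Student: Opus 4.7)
My plan is to work throughout with the explicit diagonal description of the D\pa browski--Sitarz triple. The Hilbert space $\Hi$ decomposes as $\Hi_+ \oplus \Hi_-$ with $\gamma = \pm 1$ on the summands, and each summand splits further into joint $|D|$- and $K$-eigenspaces labelled by a half-integer spin $l$ and weight $m$ running through $-l,\ldots,l$, on which $|D|$ acts by $\mu_l \asymp q^{-l}$ and $K$ by $q^m$. The generators $A, B, B^*$ of $\A$ shift $m$ by $0, -1, +1$ and act on each spin-$l$ block by matrices whose entries are uniformly bounded in $l$; the same is then true for $a_0[D,a_1][D,a_2]$ as a block-diagonal operator. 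This reduces part (1) to the estimate
$$
\sum_l \mu_l^{-\re z}\sum_{m=-l}^{l}q^{-2m}\;\asymp\;\sum_l q^{l(\re z-2)},
$$
which converges exactly for $\re z>2$, with the meromorphic continuation and the simple pole at $z=2$ coming from the resulting geometric-type sum in $q^{z-2}$; the trace-class claim follows from the same majorisation.

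For part (2) the central algebraic input is that conjugation by $K^{-2}$ restricted to $\A$ realises the modular automorphism of the dualising bimodule, $\sigma(a) = K^{-2}aK^{2}$. Introducing the functional $\tau(Y) := \underset{z=2}{\mathrm{Res}}\,\mathrm{tr}_\Hi(\gamma Y K^{-2}|D|^{-z})$, ordinary cyclicity of the trace together with the observation that commutators $[|D|^{-z},a]$ are of lower order and thus invisible to the residue yields the twisted trace identity $\tau(YZ)=\tau(\sigma(Z)Y)$ for $Z\in\A$. Applied to $\varphi(a_0,a_1,a_2)=\tau(a_0[D,a_1][D,a_2])$ and combined with the Leibniz rule for $[D,\,\cdot\,]$, the four terms of the $\sigma$-twisted Hochschild coboundary collapse to $\tau(a_0[D,a_1][D,a_2]a_3)-\tau(\sigma(a_3)a_0[D,a_1][D,a_2])$, which vanishes by the twisted trace property. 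To identify the cohomology class, I would evaluate $\varphi$ on an explicit cycle representing a generator of $H_2(\A,{}_\sigma\A)$ built from the canonical volume form or a twisted Koszul-type resolution: because the centre of $\A$ is just $\C$, the fundamental class is unique up to a nonzero scalar, so once the residue on that cycle is computed and seen to be nonzero the identification is complete.

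The hardest part, I expect, is combining the quantitative control of part (1) with the algebraic identity of part (2) in a way that honours the dual role of $K^{-2}$: the same operator has to provide exactly the weight growth that turns $K^{-2}|D|^{-z}$ into a trace-class operator with a simple pole and to realise the modular automorphism that produces the correct twisted coefficients in the cocycle identity. Making this work requires tracking the commutators $[|D|^{-z},a]$ with enough precision to justify their removal from the residue --- essentially a \emph{smoothness} statement about $\A$ with respect to the differential calculus provided by $D$ --- which is the main technical hurdle.
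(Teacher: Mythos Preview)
Your route for part (2) is genuinely different from the paper's. The paper never verifies the twisted cocycle identity via a trace argument, nor does it pair with a cycle. Instead it uses the algebraic fact (from \cite{sw}) that $\gamma a_0[D,a_1][D,a_2]$ acts on $\Hi_+$ as left multiplication by $a_0(a_1\triangleleft E)(a_2\triangleleft F)\in\A$ and on $\Hi_-$ as $-a_0(a_1\triangleleft F)(a_2\triangleleft E)\in\A$. This reduces both (1) and (2) to analysing $\zeta^\pm_{aK^{-2}}(z)$ for $a\in\A$, which Proposition~\ref{reslemma} handles completely: for $\mu=-1<0$ the residue at $z=2$ is a multiple of the counit $\varepsilon$, yielding the closed formula
\[
\varphi(a_0,a_1,a_2)=\tfrac{q-q^{-1}}{\ln q}\,\varepsilon(a_0)\bigl(E(a_1)F(a_2)-F(a_1)E(a_2)\bigr),
\]
which is then matched against the fundamental cocycle $\tilde\varphi$ of \eqref{surpris} already identified in \cite{uliarab}. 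So the cocycle property and the nontriviality are inherited from $\tilde\varphi$ rather than proved from scratch.

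Your abstract strategy could be made to work, but as written it has a real gap that the paper's approach avoids. Your ``geometric-type sum in $q^{z-2}$'' only captures the leading behaviour of $\sum_l\sum_m q^{-2m}\mu_l^{-z}$; it does not give the meromorphic continuation of $\mathrm{tr}_{\Hi_\pm}(aK^{-2}|D|^{-z})$ to $\re z>1$ for general $a\in\A$, because the bounded-operator bound on $a$ only yields holomorphy for $\re z>2$ (this is exactly Lemma~\ref{Lq}). Pushing past $\re z=2$ is where the paper needs Lemma~\ref{Mq}, the approximation $A=L^2K^2+A_0L$ with $A_0$ bounded, which peels off one power of $L\sim q^l$ and lets Proposition~\ref{reslemma} iterate. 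This same issue undermines your commutator step: the claim that $[|D|^{-z},a]$ is ``of lower order and invisible to the residue'' is precisely a regularity statement, and the paper explicitly notes (start of Section~3.3) that this spectral triple violates Connes' regularity, so the standard pseudodifferential machinery is unavailable. Lemma~\ref{Mq} is the paper's hand-crafted substitute, and you would need it (or something equivalent) to close your argument as well.
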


Let us explain the context of the result. The pioneering papers on the
noncommutative geometry of the Podle\'s
sphere were \cite{mnw}, 
where Masuda, Nakagami and Watanabe
computed $HH_\bullet(\A),HC_\bullet(\A)$
and the K-theory of the
$C^*$-completion of $\A$, and \cite{ds}, where
D\pa browski and Sitarz found the
spectral triple that we use here.  Schm\"udgen and the second author 
then gave a residue formula for a
cyclic cocycle  \cite{sw} that looks like
the one from Theorem~\ref{main}, only
that $K^{-2}$ is replaced by $K^2$. However, Hadfield later computed
the Hochschild and cyclic
homology of $\A$ with coefficients in
${}_\sigma \A$ and deduced that
the cocycle from \cite{sw} is trivial
as a Hochschild cocycle 
\cite{tom}. Finally, the first
author has recently used the cup and cap 
products between the Hochschild
(co)homology groups $H^n(\A,{}_\sigma
\A)$ \cite{uliarab} to produce the surprisingly
simple formula 
\begin{equation}\label{surpris}
\tilde\varphi(a_0,a_1,a_2)=\varepsilon (a_0)E(a_1) F(a_2)
\end{equation} 
for a nontrivial
Hochschild 2-cocycle on $\A$.
Here $\varepsilon$ is the counit of the quantum
$\SU(2)$-group $\B:=\O(\SU_q(2))$ in which
$\A$ is embedded as a subalgebra, and 
$E$ and $F$ are the standard twisted primitive
generators of ${\Uq}_q(\mathfrak{su}(2))$,
considered here
as functionals on $\B$. What we
achieve in Theorem~\ref{main} is to express (a multiple of) this cocycle
in terms of the spectral triple by a residue formula.

The crucial result is Proposition~\ref{reslemma} in Section~\ref{jetzelet}
from which it follows that 
$$
		  \tau_\mu (a) :=
		  \frac{\underset{z=2|\mu|}{\mathrm{Res}}
		  \mathrm{tr}_\Hi (a K^{2 \mu} |D|^{-z})}
		  {\underset{z=2|\mu|}{\mathrm{Res}}\mathrm{tr}_\Hi (K^{2 \mu} |D|^{-z})} 
$$
defines for all $\mu \in \mathbb{R}$ a
twisted trace on $\A$ that we can 
compute explicitly. In the notation
of \cite{uliarab}, these traces are
given by:\\
\begin{center}
\begin{tabular}{|c|c|c|}
\hline
& & \\[-8pt]
$\mu$ & $\sigma$ & $\tau_\mu$\\[-8pt]
& & \\
\hline
& & \\[-8pt]
$< 0$ & any &$\int_{[1]}=\varepsilon$ \\[-8pt]
& & \\
\hline
& & \\[-8pt]
$0$ & $\mathrm{id}$ & $\ \int_{[1]} +
			\frac{\ln q}{2(q^{-1}-q) \ln(q^{-1}-q)} \int_{[x_0]}\ $ \\[-8pt]
& & \\
\hline
& & \\[-8pt]
$\ > 0\ $ & $\ A \mapsto A$, \,$B \mapsto q^{2 \mu}B\ $
	  &$\int_{[1]}-\frac{1-q^{-2
			\mu}}{q(1-q^{-2(\mu + 1)})}\int_{[x_0]}$  \\[-8pt]
& & \\
\hline
\end{tabular}
\end{center}
\mbox{ }\\
Here $\sigma$ is the involved twisting
automorphism and $A$, $B$ are certain
generators of $\A$ (in the notation of \cite{uliarab},
$A=-q^{-1}x_0$). From this fact it will
be deduced in the final section that the
multilinear functional defined in
Theorem~\ref{main} is up to
normalisation indeed cohomologous to the
Hochschild cocycle
(\ref{surpris}) constructed in \cite{uliarab}.

The remainder of the paper is divided
into two sections: Section~\ref{background}
contains background material taken
mainly from \cite{ds,sw}. The subsequent
section discusses the meromorphic
continuation of the zeta functions
$\mathrm{tr}_{\Hi_\pm} (a K^{2 \mu}|D|^{-z})$,
$a \in \A$, and how one can compute their
residues by replacing certain algebra
elements of $\A$ by simpler operators. 
The proof of
Theorem~\ref{main} fills the final
Subsection~\ref{beweis} of the paper.\\

\thanks{The first  author would like to 
thank Viktor Levandovskyy for his
explanations of some of its
functionalities of the computer
algebra system SINGULAR:PLURAL
\cite{plural} which is very capable in carrying out
computations with algebras like the
Podle\'s sphere, see \cite{uliplural} for a quick
demonstration. 
This work was partially supported by 
the EPSRC
fellowship EP/E/043267/1, the
Polish Government Grant N201 1770 33, 
the Marie Curie PIRSES-GA-2008-230836
network, and the Mexican Government Grant 
PROMEP/103.5/09/4106, UMSNH-PTC-259.}

\section{Background}\label{background}
\subsection{The algebras 
\boldmath{$\O(\mathrm{S}^2_q),\ \O(\SU_q(2)),\ {\Uq}_q(\mathfrak{su}(2))$}}\label{stpodl} 
We retain all notations and conventions
used in \cite{sw}. In particular, we fix
a deformation parameter $q \in
(0,1)$, and let $\A=\O(\mathrm{S}^2_q)$ be the
$*$-algebra (over $\mathbb{C}$)
with generators $A=A^*$, $B$ and
$B^*$ and defining relations
$$
		  B A=q^2 AB,\quad
		  A B^*=q^2 B^*A,\quad
		  B^*B=A-A^2,\quad
		  BB^*=q^2A-q^4A^2.
$$

We consider $\A$ as a subalgebra of the
quantised coordinate ring
$\B=\O(\SU_q(2))$ which is the
$*$-algebra generated by $a$, $b$, $c=-q^{-1}b^*$, $d=a^*$
satisfying the relations given e.g.~in
\cite[Eqs.~(1) and (2) on p.~97]{chef}. 
The embedding is given by 
$B=ac$ and $A=-q^{-1}bc$, and it follows that 
$\varepsilon(A)=\varepsilon(B)=\varepsilon(B^*)=0$, 
where $\varepsilon$ denotes the counit of 
$\O(\SU_q(2))$.

Note that it follows from the defining
relations that the monomials 
$$
		  \{A^nB^m,A^n{B^*}^m\,|\,n,m \ge 0\}
$$ 
form a vector space basis of $\A$.

For the Hopf *-algebra
$\U={\Uq}_q(\mathfrak{su}(2))$, we use  
generators $K$, $K^{-1}$, $E$ and $F$ with 
involution $K^*=K$, $E^*=F$, 
defining relations 
$$
KE=qEK,\quad  KF=q^{-1}FK,\quad  EF-FE= \frac{K^2-K^{-2}}{q-q^{-1}},
$$
coproduct 
$$
\Delta(K)=K\otimes K, \ \ \Delta(E)=E\otimes K+K^{-1}\otimes E,
\ \ \Delta(F)=F\otimes K+K^{-1}\otimes F,
$$
and counit $\varepsilon(1-K)=\varepsilon(E)=\varepsilon(F)=0$. 

There is a left $\U$-action on $\A$ satisfying 
$f\rt (ab)= (f_{(1)}\rt a)(f_{(2)}\rt b)$ 
and $f\rt 1=\varepsilon(f)1$ 
for $f\in\U$ and $a,b\in\A$, that is, 
$\A$ is a left $\U$-module algebra. 
Here and in what follows, we use Sweedler's notation 
$\Delta(f)=f_{(1)}\otimes f_{(2)}$. 
On the re-parametrised generators 
\begin{equation}                                                \label{xAB}
 x_{-1}=(1+q^{-2})^{1/2}B, \quad x_0=1-(1+q^2)A, \quad x_{1}=-(1+q^2)^{1/2}B^*,
\end{equation}
this action is given by 
$$
K\rt x_i = q^i x_i,\quad E\rt x_i = (q+q^{-1}) x_{i+1}, \quad 
F\rt x_i = (q+q^{-1}) x_{i-1},
$$
where it is understood that $x_{2}=x_{-2}=0$.

\subsection{The spectral triple}
Our calculations involve the spectral
triple constructed by
D\pa browski and Sitarz in \cite{ds}. For
the reader's convenience and to fix
notation, we recall its definition.

First of all, the $*$-algebra $\A$ becomes represented
by bounded operators on  a Hilbert space
$\Hi:=\S_-\oplus \S_+$ with orthonormal
basis 
$$
		  v^l_{k,\pm} \in \Hi_\pm,\quad 
		  l= \mbox{$\frac{1}{2}$}, \mbox{$\frac{3}{2}$}, \ldots,\quad 
		  k=-l, -l+1,\ldots, l 
$$
where the 
generators $x_{-1}$, $x_0$, $x_1$ act by 
\begin{equation}                                                 \label{pi}
x_i v^l_{k,\pm} =
\alpha^-_i(l,k)_\pm\hspace{1pt} v^{l-1}_{k+i,\pm} + 
\alpha^0_i(l,k)_\pm\hspace{1pt} v^{l}_{k+i,\pm} + 
\alpha^+_i(l,k)_\pm\hspace{1pt} v^{l+1}_{k+i,\pm}.
\end{equation}  
Here 
$\alpha^\nu_i(l,k)_\pm \in \mathbb{R}$
are coefficients 
that can be found e.g.\ in \cite{DDLW}, 
where similar conventions are used. 
We will only need the formulas for
$\alpha_0^\nu(l,k)_\pm$ which are given by 
\begin{align}
		  \alpha^-_0(l,k)_\pm 
&=  \frac{q^{k\pm 1/2} 
		  [2]_q [l\!-\!k]_q^{1/2} [l\!+\!k]_q^{1/2}[l\!-\!1/2]_q^{1/2}
		  [l\!+\!1/2]_q^{1/2}}
		  {[2l\!-\!1]_q^{1/2} [2l]_q
		  [2l\!+\!1]_q^{1/2}},                                      
		  \label{x0-}\\
		  \alpha^0_0(l,k)_\pm 
&= [2l]_q^{-1}
		  \big([l\!-\!k\!+\!1]_q[l\!+\!k]_q - 
		  q^{2} [l\!-\!k]_q[l\!+\!k\!+\!1]_q\big)
		  \beta_\pm (l), 
		  \label{x00} \\
		  \alpha^+_0(l,k)_\pm 
&= \alpha^-_0(l+1,k)_\pm                    
\end{align}
with
\begin{equation}\label{qzahl}
		  [n]_q:=\frac{q^n-q^{-n}}{q-q^{-1}}
\end{equation} 
and
\begin{align}\label{beta+-}                            
		  \beta_\pm(l) 
&= \frac{\pm q^{\mp 1} +  
		  (q\hsp -\hsp
		  q^{-1})([1/2]_q\hspace{1pt}[3/2]_q- 
		  [l]_q[l\!+\!1]_q)
		  }{q[2l+2]_q}.
\end{align}

We now define
$$
		  \dom := \mathrm{span}_\mathbb{C} \{v^l_{k,\pm}\ |\   
		  l= \mbox{$\frac{1}{2}$}, \mbox{$\frac{3}{2}$}, \ldots,\  \,
		  k=-l, -l+1,\ldots, l \}
$$
and on this domain an essentially self-adjoint
operator
$D$ by
$$
		  D v^l_{k,\pm} = [l+1/2]_q v^l_{k,\mp}.
$$

In the sequel all operators we consider
will be defined on this domain, leave
it invariant, and be closable. By slight
abuse of notation we will not
distinguish between an operator defined
on $\dom$ and its closure. 

The $v^l_{k,\pm}$ are eigenvectors
of $|D|$: 
$$
		  |D| v^l_{k,\pm} = [l+1/2]_q v^l_{k,\pm}.
$$
Furthermore,  
the spectral triple is even with grading
$\gamma$ given by
$$
		  \gamma v^l_{k,\pm}=\pm v^l_{k,\pm}.
$$

\subsection{$\U$-equivariance} 
The spectral triple is 
$\U$-equivariant in the sense
of \cite{SitarzBCP}:
On $\dom$ there is 
an action of $\U$ given by 
\begin{equation}\label{K}
		  Kv^l_{k,\pm}=q^k v^l_{k,\pm},
			\quad  Ev^l_{k,\pm}=\alpha^l_k v^l_{k+1,\pm}, 
			\quad Fv^l_{k,\pm}=\alpha^l_{k-1} v^l_{k-1,\pm}, 
\end{equation}
where $\alpha^l_k:=([l-k]_q[l+k+1]_q)^{1/2}$, 
and we have  on $\dom$
$$
		  fa=(f_{(1)}\rt a)f_{(2)},\quad
		  fD=Df,\quad 
		  f\gamma= \gamma f
$$ 
for all $f \in \U$, $a \in \A$. 

Note that the decompostion $\Hi=\Hi_-\oplus \Hi_+$ reduces the representation 
of $\U$ and $\A$ on $\Hi$.

\section{Results} 
\subsection{A family of $q$-zeta functions} 
Quantum group analogues of zeta functions were
studied by several authors, in
particular by Ueno and Nishizawa \cite{ueno},
Cherednik \cite{cherednik}, and Majid
and Toma\v si\'c \cite{majid}. The
ones we will consider here are given on
a suitable domain by
$$
		  \zeta^\pm_T (z) := \mathrm{tr}_{\Hi_\pm}(T|D|^{-z}) 
$$
for some possibly unbounded operator $T$
on $\Hi$. 
The most important case we need is $T=L^\beta K^\delta$ 
for $\beta,\delta \in\mathbb{R}$, where 
$$
		  L v^l_{k,\pm}=q^l v^l_{k,\pm}
$$
and thus 
$$
		  L^\beta K^\delta v^l_{k,\pm}= q^{\beta l+\delta k}
		  v^l_{k,\pm}. 		  
$$

The resulting zeta functions
differ slightly from those considered in
\cite{cherednik,ueno}, and also 
from the one occuring in \cite{sw}. Yet
the main argument leading to a
meromorphic continuation of the functions
to 
the whole complex plane given in
\cite{ueno} can be applied in all these
cases:

\begin{lemma}\label{L137}
For all $\beta,\delta \in \mathbb{R}$, the function 
$$
		  \zeta^\pm_{L^\beta K^\delta}(z)=
		  \sum_{l=\frac{1}{2},\frac{3}{2},\ldots}
			^{\infty} \sum_{k=-l}^{l}  
			\frac{q^{\beta l+\delta k}}
		  {[l+1/2]^z_q},\quad
		  \re z > -\beta+ |\delta|
$$
admits a meromorphic
continuation to the complex plane
given by
\begin{equation*}             
		  \zeta^\pm_{L^\beta K^\delta}(z)=
	  q^{\frac{\beta}{2}}
	  (q^{-\frac{\delta}{2}} \hsp + \hsp q^{\frac{\delta}{2}}) 
		   (1 \hsp - \hsp q^2)^z
		   \sum_{j=0}^\infty		
		  \frac{
		  \binom{z+j-1}{j} \, q^{2j} }
		  {(1 \hsp - \hsp q^{\beta-\delta + 2j + z })
                  (1 \hsp - \hsp q^{\beta+\delta + 2j + z })} 
\end{equation*}
which is holomorphic except at $z=-\beta
 \pm \delta,-\beta \pm \delta -2,\ldots$
and whose residue at
$z=-\beta + |\delta|$ is given by
$$
		  \underset{z=-\beta + |\delta|}
		  {\mathrm{Res}}\;\zeta^\pm_{L^\beta K^\delta} (z)=
		  \left\{\begin{array}{ll}
		  \frac{q^{\frac{\beta-|\delta|}{2}}
		  \left(1-q^2\right)^{|\delta|-\beta}}{\left(q^{|\delta|}-1\right) 
		  \ln (q)}\hs,
\quad & \delta \neq 0,\\[8pt]
		    \frac{2q^{\frac{\beta}{2}}\ln(q^{-1}-q)}
		  {(1-q^2)^\beta (\ln q)^2}\hs,
\quad & \delta = 0.
\end{array}\right.  
$$
\end{lemma}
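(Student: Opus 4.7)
My plan is to reduce the double sum defining $\zeta^\pm_{L^\beta K^\delta}(z)$ to the stated closed form through three elementary manipulations. First I would carry out the inner sum over $k$, which is a finite geometric progression; after substituting $m:=l+\tfrac12$, this gives, for $\delta\neq 0$,
$$
\zeta^\pm_{L^\beta K^\delta}(z)=\frac{q^{-\beta/2}}{q^{\delta/2}-q^{-\delta/2}}\sum_{m=1}^\infty \frac{q^{(\beta+\delta)m}-q^{(\beta-\delta)m}}{[m]_q^{\,z}}.
$$
Second, I would rewrite $[m]_q^{-z}$ using $[m]_q=(q^{-1}-q)^{-1}q^{-m}(1-q^{2m})$ together with the generalised binomial expansion
$$
(1-q^{2m})^{-z}=\sum_{j=0}^\infty \binom{z+j-1}{j}q^{2mj},
$$
which is valid since $|q^{2m}|<1$. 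Third, I would interchange the order of summation, justified by absolute convergence in the half-plane $\re z>-\beta+|\delta|$; the resulting inner $m$-sum is geometric. A routine simplification — in particular the identities $\frac{q^\delta-q^{-\delta}}{q^{\delta/2}-q^{-\delta/2}}=q^{\delta/2}+q^{-\delta/2}$ and $(q^{-1}-q)^z=q^{-z}(1-q^2)^z$ — then reshuffles the constants into the displayed closed form.

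The meromorphic continuation is immediate from this expression: the factor $q^{2j}$ dominates the polynomial growth of $\binom{z+j-1}{j}$, so the $j$-series converges uniformly on compact subsets of $\mathbb{C}$ away from the zeros of the denominators $1-q^{\beta\pm\delta+2j+z}$. Because $q\in(0,1)$ is real, these zeros sit exactly at $z=-\beta\mp\delta-2j$ for $j\geq 0$, which gives the claimed set of poles.

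For the residue at $z=-\beta+|\delta|$ I may assume $\delta\geq 0$ by symmetry, since the two denominators exchange under $\delta\mapsto-\delta$. When $\delta>0$ only the $j=0$ summand is singular, through the factor $(1-q^{\beta-\delta+z})^{-1}$; the local expansion $1-q^w\sim-w\ln q$ gives residue $-1/\ln q$, and combining this with the remaining prefactors together with $\frac{q^{\delta/2}+q^{-\delta/2}}{1-q^{2\delta}}=\frac{q^{-\delta/2}}{1-q^\delta}$ yields the first line of the formula.

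The one step that demands real care is $\delta=0$: here the $j=0$ denominator $(1-q^{\beta+z})^2$ produces a \emph{double} pole at $z=-\beta$, so the residue mixes the linear Laurent coefficients of $1-q^{\beta+z}$ and of $(1-q^2)^z$. Expanding both to first order in $u:=z+\beta$ and extracting the coefficient of $u^{-1}$, the combination $\ln(1-q^2)-\ln q=\ln(q^{-1}-q)$ appears and delivers the second line. This double-pole bookkeeping is the only point where anything beyond routine manipulation is needed.
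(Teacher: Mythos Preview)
Your proposal is correct and follows essentially the same route as the paper: perform the inner geometric sum over $k$, rewrite $[m]_q^{-z}$ via the binomial series, interchange summations, and evaluate the resulting geometric $m$-sum; the paper likewise isolates the $j=0$ term for the residue and, in the $\delta=0$ case, extracts the coefficient of $u^{-1}$ from the double pole. The only cosmetic difference is that the paper writes out the $\delta=0$ derivation of the closed form separately (using $\sum (n+1)t^n=(1-t)^{-2}$), whereas you state the intermediate formula only for $\delta\neq 0$; since the final expression is analytic in $\delta$ this is harmless.
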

\begin{proof}
The crucial step is to use the binomial series 
\begin{equation}                    \label{bin}
 		  (1-q^{2(n+1)})^{-z}=
		  \sum_{j=0}^\infty
		 \binom{z+j-1}{j} q^{2(n+1)j}   
\end{equation}
which holds for all $z\in\C$. 

First, let  $\delta = 0$.  
By summing over $k$, 
replacing $l=n+\frac{1}{2}$, 
inserting \eqref{bin}, and interchanging 
the order of the summations in the absolutely
convergent series, 
we obtain for $\re z > -\beta$
\begin{align*}
 \zeta^\pm_{L^\beta} (z)
&=  2(q^{-1}-q)^z \sum_{n = 0}^{\infty} (n+1)
 q^{\beta (n+\frac{1}{2})} q^{(n+1)z} (1-q^{2(n+1)})^{-z}\\
&=
		  2 q^{-\frac{\beta}{2}}(q^{-1}-q)^z 
		\sum_{j=0}^\infty  \sum_{n = 0}^{\infty} (n+1)
		\binom{z\hs +\hs j\hs -\hs 1}{j}
		 q^{(\beta+2j+z) (n+1)}. 
\end{align*}
Using the identity 
$$
		  \sum_{n = 0}^{\infty}
 (n+1)
 t^n=\frac{\mathrm{d}}{\mathrm{d}t} 
 \sum_{n=0}^{\infty} t^n = \frac{1}{(1-t)^2},  
$$
we can write the above sum as 
$$
		  \zeta^\pm_{L^\beta} (z)=
		  2 q^{-\frac{\beta}{2}}(q^{-1}-q)^z 
		  \sum_{j=0}^\infty
		 \left(\begin{array}{c} z+j-1 \\ j
				 \end{array}\right) 
\frac{q^{\beta+2j+z}}{(1-q^{\beta+2j+z})^2}, 
$$
and the right hand side is a
meromorphic function with poles of
second order in
 $z=-\beta,-\beta-2,-\beta-4,\ldots$

If $\delta \neq 0$, the sum over $k$ yields  
$$
		  \sum_{k=-(n+\frac{1}{2})}^{n+\frac{1}{2}} 
		  q^{\delta k} =
		  \frac{q^{-\delta (n+1)}-q^{\delta(n+1)}}
		  {q^{-\frac{\delta}{2}}-q^{\frac{\delta}{2}}}. 
$$  
Similar to the above, we 
get for $\re z > -\beta + |\delta|$
\begin{align*}
&\zeta^\pm_{L^\beta K^\delta} (z) =\\
&
q^{-\frac{\beta}{2}}\frac{(q^{-1}\hsp-\hsp q)^z}{q^{-\frac{\delta}{2}}\hsp-\hsp q^{\frac{\delta}{2}}} 
\sum_{j=0}^\infty  \sum_{n = 0}^{\infty} 
		\!\binom{z \!+ \!j \!-\! 1}{j}
		 (q^{(\beta-\delta+2j+z) (n+1)} \hsp -\hsp  q^{(\beta+\delta+2j+z) (n+1)}). 
\end{align*}
The summation over $n$ gives 
\begin{align*}
 \sum_{n = 0}^{\infty} q^{(\beta-\delta+2j+z) (n+1)}\hsp -\hsp q^{(\beta+\delta+2j+z) (n+1)}=
 \frac{ (q^{-\delta} - q^{\delta})q^{\beta+ 2j + z }}
{(1 \hsp - \hsp q^{\beta-\delta + 2j + z })(1 \hsp - \hsp q^{\beta+\delta + 2j + z })}. 
\end{align*}
Inserting the last equation into the previous one yields the second formula of Lemma \ref{L137} 
which defines a meromorphic function with poles at 
$z=-\beta \hsp \pm\hsp  \delta,-\hsp \beta\hsp\pm\hsp\delta\hsp-\hsp 2,\ldots$

When computing the 
residues at $z=-\beta+|\delta|$, we can
ignore the sum over $j>0$
which is holomorphic in a
neighbourhood of 
$-\beta+|\delta|$. Thus 
$$
		  \underset{z=-\beta+|\delta|}{\mathrm{Res}}\;
		  \zeta^\pm_{L^\beta K^\delta} (z)=
		  \underset{z=-\beta+|\delta|}{\mathrm{Res}}\;
		  \frac{q^{\frac{\beta}{2}}(1-q^2)^z
		  (q^{-\frac{\delta}{2}}+q^{\frac{\delta}{2}})}
		  {(1-q^{\beta-\delta+z})(1-q^{\beta+\delta+z})} 
$$		  
which can be computed straightforwardly
 to yield the result.
\end{proof}

\subsection{A holomorphicity remark}  
Next we need to point out that
$\zeta^\pm_{TL^\beta K^\delta}(z)$ is
holomorphic for $\re z > -\beta+
|\delta|$ whenever $T$ is bounded.
Let us first introduce some notation
that we will use throughout the rest of
the paper in order to simplify
statements and proofs:
\begin{dfn}
We say that a set of complex numbers 
$$\{ \nu_{l,k}\;|\;l\in\mbox{$\frac{1}{2}$}\N,\ \, k=-l,\ldots l\}$$ 
is of order less than or equal to $q^\alpha$, \,$\alpha\in \R$,  
if there exists $C\in(0,\infty)$ such that 
$|\nu_{l,k}|\leq C q^{\alpha l}$ for all
 $k,l$. In this case we write 
$$\nu_{l,k}\precsim q^{\alpha l}.$$ 
\end{dfn}

We refrain from using the
notation $\nu_{k,l}=\mathrm{O}(q^{\alpha l})$ to
avoid confusion about the fact that the
second parameter $k$ can take arbitrary
values from  $\{-l,\ldots,l\}$. Note that
we have for all $\beta, \delta\in \R$ and $z\in\C$
\begin{equation}                                         \label{precsim}
q^{\beta l + \delta k} \precsim q^{(\beta-|\delta|)l},\ \ 
[l-k]_q[l+k]_q\precsim q^{-2l},  \ \ 
[\beta l+\delta]_q^{-z}\precsim q^{
|\beta| \,\re(z)\,l} .
\end{equation}

Now one easily observes:
\begin{lemma}\label{Lq}
For all bounded operators $T$ on $\Hi$ 
 and for all $\beta,\delta \in
 \mathbb{R}$, the function 
$\zeta^\pm_{T{\Lq}^\beta K^\delta}(z)$
is holomorphic on $\{z \in
 \mathbb{C}\,|\,\re z> -\beta+|\delta|\}$.
\end{lemma}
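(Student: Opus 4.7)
The plan is to reduce the holomorphicity statement to the absolute convergence already encountered in Lemma~\ref{L137}. The key observation is that the orthonormal basis $\{v^l_{k,\pm}\}$ simultaneously diagonalises ${\Lq}$, $K$, and $|D|$, so ${\Lq}^\beta K^\delta |D|^{-z}$ acts as multiplication by $q^{\beta l + \delta k}/[l+1/2]_q^z$ on $v^l_{k,\pm}$. The proof of Lemma~\ref{L137} shows in particular that for $\re z > -\beta + |\delta|$ the series
$$
\sum_{l,k} \frac{q^{\beta l + \delta k}}{[l+1/2]_q^{\re z}}
$$
converges, and therefore ${\Lq}^\beta K^\delta |D|^{-z}$ is trace class on $\Hi_\pm$.

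Since the trace class operators form a two-sided ideal in the algebra of bounded operators on $\Hi_\pm$, the product $T {\Lq}^\beta K^\delta |D|^{-z}$ is trace class whenever $T$ is bounded, and its trace can be evaluated in any orthonormal basis. Using the eigenbasis of ${\Lq}^\beta K^\delta |D|^{-z}$, the trace becomes
$$
\zeta^\pm_{T{\Lq}^\beta K^\delta}(z) = \sum_{l,k} \frac{q^{\beta l + \delta k}}{[l+1/2]_q^z}\, \ip{v^l_{k,\pm}}{T v^l_{k,\pm}}.
$$

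Each summand in this series is entire in $z$, and since $|\ip{v^l_{k,\pm}}{T v^l_{k,\pm}}| \leq \|T\|$ and $[l+1/2]_q > 0$, the absolute values are dominated by $\|T\|\cdot q^{\beta l + \delta k}/[l+1/2]_q^{\re z}$. The corresponding majorising sum converges uniformly on compact subsets of $\{\re z > -\beta + |\delta|\}$ by the estimate already used in Lemma~\ref{L137} (on any such compact set $\re z$ stays bounded below by some $-\beta+|\delta|+\epsilon$). A standard Weierstrass convergence argument then yields the asserted holomorphicity. The only substantive point is the observation that trace class $\times$ bounded is trace class, which legitimises the basis-independent evaluation of the trace even when $T$ does not preserve the basis $\{v^l_{k,\pm}\}$; beyond this there is no genuine obstacle.
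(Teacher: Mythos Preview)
Your argument is correct and follows essentially the same route as the paper: first observe that $L^\beta K^\delta |D|^{-z}$ is trace class for $\re z > -\beta + |\delta|$, use the ideal property to conclude $T L^\beta K^\delta |D|^{-z}$ is trace class, then bound the diagonal matrix entries $\langle v^l_{k,\pm}, T v^l_{k,\pm}\rangle$ by $\|T\|$ and apply Weierstra{\ss} to the uniformly convergent series. The only cosmetic difference is that the paper factors $|D|^{-z}=|D|^{-\mathrm{i}s}|D|^{-r}$ to exhibit the trace class property, whereas you argue directly via the absolute summability of the diagonal eigenvalues.
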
   
\begin{proof}
Since $L^\beta K^\delta |D|^{-r}$ is for
$r \in \mathbb{R}$ positive and
 essentially self-adjoint,
the summability of its eigenvalues
verified in
Lemma~\ref{L137} shows that it is of trace
class if $r > - \beta +
|\delta|$. Therefore 
$TL^\beta K^\delta |D|^{-z}=
T|D|^{-\mathrm{i} s} L^\beta K^\delta
 |D|^{-r}$, $z=r+\mathrm{i} s$, is a trace class operator.
 
If one fixes $\epsilon >
 0$, then the infinite series 
defining 
$\mathrm{tr}_{\Hi_\pm}(T {\Lq}^\beta K^\delta |D|^{-z})$
converges uniformly on 
$\{z \in \mathbb{C}\,|\,\re z\geq
 \epsilon-\beta+|\delta|\}$ 
since the geometric series 
$$
		  \sum_{l \in \mathbb{N}}
		  q^{(\re(z)+\beta - |\delta|)l}
$$ 
does so 
and we have for all bounded sequences
 $t^l_{k,\pm}:=\langle v^l_{k,\pm},T
 v^l_{k,\pm} \rangle$
$$
		  \frac{t^l_{k,\pm} q^{\beta l+\delta k}}{[l+1/2]_q^z}
		  \precsim
		  q^{(\re(z)+\beta-|\delta|)l}.
$$  

The partial sums of the series are clearly holomorphic 
functions and, by the above argument,  converge uniformly on compact 
sets contained in  $\{z \in \mathbb{C}\,|\,\re z>
 -\beta + |\delta |\}$. The result follows now from
the Weierstra{\ss} convergence theorem.  
\end{proof}

\subsection{Approximating the generator \boldmath{$A$}}  

It is known \cite{nt} that the
spectral triple we consider here violates Connes'
regularity condition, so the 
standard machinery of zeta functions and
generalised pseudo-differential
operators (see e.g.~\cite{connesmosco,higson})
can not be applied here. 
As a replacement of a pseudo-differential calculus, 
we approximate in the following lemma the 
generator $A \in \A$ on $\Hi$ by simpler operators. 
Similar ideas have been used in \cite{DDLW}. 
\begin{lemma}\label{Mq}
There exists a bounded linear
operator $A_0$ on $\Hi$ 
such that 
\begin{equation}
 A={\Mq}+ A_0{\Lq},\quad \text{where}\ {\Mq} := \Lq^2K^2.
\end{equation}
\end{lemma}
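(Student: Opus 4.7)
The strategy is to compute the matrix entries of $A - M$ explicitly in the orthonormal basis $\{v^l_{k,\pm}\}$ of $\Hi$, verify that they are all $\precsim q^l$, and then take $A_0$ to be the operator whose matrix entries are those of $A-M$ divided by $q^l$. From $x_0 = 1 - (1+q^2) A$ and (\ref{pi}),
\[
(A-M) v^l_{k,\pm} = \Bigl(\tfrac{1 - \alpha^0_0(l,k)_\pm}{1+q^2} - q^{2(l+k)}\Bigr) v^l_{k,\pm} - \tfrac{\alpha^-_0(l,k)_\pm}{1+q^2}\, v^{l-1}_{k,\pm} - \tfrac{\alpha^+_0(l,k)_\pm}{1+q^2}\, v^{l+1}_{k,\pm}.
\]

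Writing $[n]_q = q^{-n}(1-q^{2n})/(q^{-1}-q)$ and manipulating (\ref{x00}) gives
\[
[2l]_q^{-1}\bigl([l-k+1]_q[l+k]_q - q^2 [l-k]_q[l+k+1]_q\bigr) = \frac{1 + q^{4l+2} - (1+q^2)q^{2(l+k)}}{1 - q^{4l}},
\]
while inspection of (\ref{beta+-}) yields $\beta_\pm(l) = 1 + O(q^{2l})$. Hence $\alpha^0_0(l,k)_\pm = 1 - (1+q^2)q^{2(l+k)} + O(q^{2l})$ uniformly in $k$, so the diagonal coefficient above is $\precsim q^{2l} \precsim q^l$.

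For the off-diagonals, squaring (\ref{x0-}) produces
\[
\bigl(\alpha^-_0(l,k)_\pm\bigr)^2 = [2]_q^2\, q^{2k\pm 1} q^{4l} \cdot \frac{(1-q^{2(l-k)})(1-q^{2(l+k)})(1-q^{2l-1})(1-q^{2l+1})}{(1-q^{4l-2})(1-q^{4l})^2(1-q^{4l+2})},
\]
and the denominator is bounded away from zero for $l \geq 3/2$ (while $\alpha^-_0(\tfrac12,k)_\pm$ vanishes thanks to $[0]_q = 0$). The only potentially unbounded factor on the right is $q^{2k}$, which reaches $q^{-2l}$ when $k = -l$; setting $t := q^{2(l+k)} \in (0,1]$ turns $q^{2k}(1-q^{2(l+k)}) = q^{-2l} t(1-t) \leq q^{-2l}/4$, so $(\alpha^-_0(l,k)_\pm)^2 \precsim q^{2l}$, i.e.\ $\alpha^-_0(l,k)_\pm \precsim q^l$, uniformly in $k$. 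The analogous bound for $\alpha^+_0(l,k)_\pm = \alpha^-_0(l+1,k)_\pm$ is immediate.

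With all three coefficients $\precsim q^l$, define $A_0$ on $\dom$ by $A_0 v^l_{k,\pm} := q^{-l}(A-M) v^l_{k,\pm}$, so that $A - M = A_0 L$ on $\dom$ by construction. In the basis $\{v^l_{k,\pm}\}$ the matrix of $A_0$ has at most three nonzero entries per row and per column with uniformly bounded moduli; equivalently, $A_0$ decomposes as a bounded diagonal operator plus two $l$-shift operators (lowering and raising $l$ by one) with uniformly bounded coefficients, each plainly bounded on $\Hi$. Hence $A_0$ extends to a bounded operator on $\Hi$. The only delicate step is the uniform estimate of $\alpha^-_0$: a term-by-term estimate fails because $q^{2k}$ diverges as $k\to -l$, and one must exploit the simultaneous vanishing of $(1-q^{2(l+k)})$, made quantitative by the elementary inequality $t(1-t)\leq 1/4$.
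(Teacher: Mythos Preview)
Your proof is correct and follows the same strategy as the paper: set $A_0:=(A-M)L^{-1}$ on $\dom$ and check that the three resulting coefficient families are uniformly bounded; the diagonal computation matches the paper's essentially verbatim.

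One correction to your closing commentary, though: the paper handles $\alpha^-_0(l,k)_\pm$ by a straightforward term-by-term application of the estimates in~(\ref{precsim}), and this does succeed. In your own squared formula the point is simply that $q^{2k}\cdot q^{4l}=q^{2k+4l}\le q^{2l}$ for $k\ge -l$, while the remaining quotient of $(1-q^{\cdots})$ factors is bounded. So the $t(1-t)\le 1/4$ device, although valid, is not needed, and your assertion that ``a term-by-term estimate fails because $q^{2k}$ diverges'' is incorrect.
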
   
\begin{proof}
We have to prove that 
$A_0:=(A-{\Mq}){\Lq}^{-1}$ extends to a
bounded operator on $\Hi$. 
Inserting \eqref{xAB} and \eqref{pi}
into this definition shows that  it 
suffices to prove that the coefficients 
$$
		  q^{-l}\alpha^\pm_0(l,k)_\pm\ \ \mbox{and} \ \ 
		  q^{-l}\left(\mbox{$\frac{1}{1+q^2}$}(1-\alpha_0^0(l,k)_\pm)-q^{2(l+k)}\right)
$$
are bounded. 
Applying \eqref{precsim} to \eqref{x0-} gives 
$\alpha^\pm_0(l,k)_\pm\precsim
 q^l$. Therefore we have
$q^{-l}\alpha^\pm_0(l,k)_\pm\precsim 1$ 
which means that these coefficients are bounded. 

Using \eqref{precsim} and
$\frac{1}{1-q^{4l+4}}-1\precsim q^{4l}$, 
we get from \eqref{beta+-} 
$$                                           
		  \beta_\pm(l) =
		  \frac{(q^{-1}\hsp -\hsp q) 
		  [l]_q[l\!+\!1]_q}{q[2l+2]_q}+u_l = 
		  \frac{1-q^{2l}-q^{2l+2}+
		  q^{4l+2}}{1-q^{4l+4}} + u_l=
		  1+v_l,
$$
where $u_l,v_l\precsim
 q^{2l}$. Similarly, we have
\begin{align*}
 \frac{[l\!-\!k\!+\!1]_q[l\!+\!k]_q -
		  q^{2} [l\!-\!k]_q[l\!+\!k\!+\!1]_q}
		  {[2l]_q}
&= \frac{1-(1\!+\!q^2)q^{2l+2k}+(1\!+\!q^2)q^{2l}}{1-q^{4l}}\\[4pt]
&=1 -(1+q^2)q^{2l+2k} + w_{l,k},
\end{align*}
where $w_{l,k}\precsim q^{2l}$. 
Multiplying the last two equations 
and comparing with \eqref{x00} gives
$$
		  \alpha^0_0(l,k)_\pm = 1
		  -(1+q^2)q^{2l+2k} + x_{l,k}
$$ 
with 
$x_{l,k}\precsim q^{2l}$. From this, we get 
$$
		  q^{-l}\left(\mbox{$\frac{1}{1+q^2}$}
		  (1-\alpha_0^0(l,k)_\pm)-q^{2(l+k)}\right)= 
		  q^{-l}x_{l,k}\precsim q^l\precsim 1
$$
which finishes the proof.
\end{proof}

\subsection{Twisted traces as residues}\label{jetzelet}
We are now ready to prove the main technical result
of the paper which expresses certain
twisted traces of $\A$ as residues of
zeta-functions:

\begin{prop}\label{reslemma}
The function $\zeta^\pm_{a K^{2 \mu}}(z)$,
 $a \in \A$, has a
meromorphic continuation to 
$\{z \in
\mathbb{C}\,|\,\re  z> 2|\mu|-1 \}$. 
Its residues at  $z=2|\mu|$ are (both
 for $+$ and $-$) given by\\
\phantom{X} 
\begin{center}
\begin{tabular}{| c | c | c |}
\hline
& & \\[-4pt]
$a$ & $\mu$ & $\underset{z=2|\mu|}
{\mathrm{Res}}\zeta^\pm_{a K^{2 \mu}}(z)$\\[12pt]
\hline
& & \\[-8pt]
$\ A^nB^m,\ A^nB^{*m},\ \ n \ge 0,\ m>0\ $
& {\rm any} 
& $0$\\[-8pt]
& & \\
\hline
& & \\[-8pt]
$A^n,\ \ n>0$ 
& $< 0$ 
& $0$\\
& & \\[-8pt]
\hline
& & \\[-8pt]
$A^n,\ \ n>0$ 
& $\ \geq 0\ $
&$\frac{-q^\mu(q^{-1}-q)^{2\mu}}{(1-q^{2(n+\mu)})\hs\ln q}$\\
& & \\[-8pt]
\hline
& & \\[-8pt]
$1$ 
& $\neq 0$ 
& $\ \frac{-q^{|\mu|}(q^{-1}-q)^{2|\mu|}}{(1-q^{2|\mu|})\,\ln (q)}\ $ \\
& & \\[-8pt]
\hline
& & \\[-8pt]
$1$ & $0$ & $\frac{2\ln(q^{-1}-q)}{(\ln q)^2}$\\[-8pt]
& & \\
\hline
\end{tabular}
\end{center}
\end{prop}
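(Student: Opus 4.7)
The plan is to split the table into its three horizontal blocks and reduce each to the machinery already assembled in Lemmas \ref{L137}, \ref{Lq}, and \ref{Mq}.

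For the first block ($a = A^nB^m$ or $a = A^nB^{*m}$ with $m > 0$), the argument is purely combinatorial on indices: since $A \propto 1 - x_0$ preserves the weight index $k$ in the basis $v^l_{k,\pm}$ while $B \propto x_{-1}$ and $B^* \propto x_1$ shift $k$ by $\mp 1$, and both $K^{2\mu}$ and $|D|^{-z}$ are diagonal in this basis, every diagonal matrix element of $aK^{2\mu}|D|^{-z}$ vanishes. Hence the trace is identically zero, the continuation is trivial, and all residues vanish. For the second block ($a = 1$) a direct substitution into Lemma \ref{L137} with $\beta = 0$, $\delta = 2\mu$ produces both the continuation and the two tabulated residues, provided one rewrites $(q^{-1}-q)^{2|\mu|} = q^{-2|\mu|}(1-q^2)^{2|\mu|}$ to match notations.

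The main block ($a = A^n$, $n > 0$) is the case that really uses the approximation from Lemma \ref{Mq}. Writing $A = M + A_0 L$ with $M = L^2 K^2$ and $A_0$ bounded, and exploiting that $L$ and $M$ commute on $\dom$ (both being diagonal in $v^l_{k,\pm}$), one has the telescope
$$A^n - M^n = \sum_{i=0}^{n-1} A^{n-1-i}(A-M)M^i = \Bigl(\sum_{i=0}^{n-1} A^{n-1-i} A_0 M^i\Bigr) L =: B_n L,$$
with $B_n$ bounded. Consequently
$$\zeta^\pm_{A^n K^{2\mu}}(z) = \zeta^\pm_{L^{2n}K^{2n+2\mu}}(z) + \zeta^\pm_{B_n L K^{2\mu}}(z).$$
Lemma \ref{Lq} applied to the second summand (with $\beta = 1$, $\delta = 2\mu$) gives holomorphicity on $\{\re z > 2|\mu|-1\}$, while Lemma \ref{L137} applied to the first (with $\beta = 2n$, $\delta = 2n+2\mu$) yields a meromorphic continuation to $\C$ whose poles sit only at $z = 2\mu - 2i$ and $z = -4n - 2\mu - 2i$ for $i = 0, 1, 2, \ldots$. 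Together these establish the claimed meromorphic continuation. For $\mu < 0$ neither pole family meets $z = -2\mu$, so the residue is zero, matching the second row of the table. For $\mu \geq 0$ only the simple pole at $z = 2\mu$ contributes (since $\delta = 2n + 2\mu \neq 0$), and the residue formula in Lemma \ref{L137} evaluates to $\frac{q^{-\mu}(1-q^2)^{2\mu}}{(q^{2n+2\mu}-1)\ln q}$, which coincides with the third row after the same algebraic rewriting used for the $a = 1$ case.

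The one point where some care is needed is the non-commutative bookkeeping inside the telescope for $A^n - M^n$; this is defused by the observation $[L, M] = 0$ on $\dom$, which permits all $L$-factors produced in intermediate terms to be transported to the extreme right and collected into a single factor, with the remainder absorbed into the uniformly bounded operator $B_n$. Everything after that reduces to reading the two meromorphy lemmas.
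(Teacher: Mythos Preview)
Your proof is correct and follows essentially the same strategy as the paper: the $B^m$/$B^{*m}$ cases vanish by the $k$-shift argument, the case $a=1$ is Lemma~\ref{L137} with $\beta=0,\ \delta=2\mu$, and the case $a=A^n$ is reduced to $M^n=L^{2n}K^{2n}$ via Lemma~\ref{Mq} together with the holomorphicity bound of Lemma~\ref{Lq}. The only cosmetic difference is that the paper peels off one factor of $A$ at a time, showing inductively that $\mathrm{Res}_{z=2|\mu|}\zeta^\pm_{A^{n}M^{m}K^{2\mu}}=\mathrm{Res}_{z=2|\mu|}\zeta^\pm_{A^{n-1}M^{m+1}K^{2\mu}}$, whereas you package the same iteration into the single telescoping identity $A^n-M^n=B_nL$; both rely on $[L,M]=0$ and on Lemma~\ref{Lq} with $\beta=1$ to kill the remainder.
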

\begin{proof}
Lemma \ref{Lq} implies that the traces  
$\zeta^\pm_{aK^{2\mu}}$ exist
and are holomorphic on  
$\{z \in\mathbb{C}\,|\,\re z> 2|\mu| \}$
for all $a \in \A$.

Furthermore, $B$ and $B^*$ act as shift operators in the 
index $k$ of $v^l_{k,\pm}$. 
Hence the traces $\mathrm{tr}_{\Hi_\pm}(A^n B^mK^{2 \mu}|D|^{-z})$ and 
$\mathrm{tr}_{\Hi_\pm}(A^n B^{*m}K^{2 \mu}|D|^{-z})$ 
va\-nish whenever $m>0$, so we can use 
the trivial
analytic continuation here. Thus it remains to prove 
the claim for $a=A^n$.

Applying first Lemma~\ref{Mq} and using then 
the fact that, by Lemma \ref{Lq}, 
$\mathrm{tr}_{\Hi_\pm}(A^{n-1}A_0{\Mq}^m {\Lq}K^{2\mu}|D|^{-z})$ 
is on $\{z \in\mathbb{C}\,|\,\re z>
 2|\mu|-1 \}$ holomorphic, 
we obtain for all $m \ge 0$, $n>0$    
\begin{align*}
 & \underset{z=2|\mu|}{\mathrm{Res}}\, 
		  \mathrm{tr}_{\Hi_\pm}(A^n {\Mq}^mK^{2\mu}|D|^{-z})\\
&= \!\underset{z=2|\mu|}{\mathrm{Res}}\, 
		  \mathrm{tr}_{\Hi_\pm}(A^{n-1}({\Mq}+A_0{\Lq}) 
		  {\Mq}^mK^{2\mu}|D|^{-z})\\
&=\! \underset{z=2|\mu|}{\mathrm{Res}} 
		  \mathrm{tr}_{\Hi_\pm}\hsp(A^{n-1} {\Mq}^{m+1}K^{2\mu}|D|^{-z})
\hsp +\!\underset{z=2|\mu|}{\mathrm{Res}}
		  \mathrm{tr}_{\Hi_\pm}\hsp(A^{n-1}A_0{\Mq}^m
		  {\Lq}K^{2\mu}|D|^{-z})\\
&=\! \underset{z=2|\mu|}
		  {\mathrm{Res}}
		  \mathrm{tr}_{\Hi_\pm}(A^{n-1} 
		  {\Mq}^{m+1}K^{2\mu}|D|^{-z}). 
\end{align*}
Recall that $M=L^2K^2$. 
An iterated application of the previous equation gives 
\begin{align*}
 		  \underset{z=2|\mu|}
		  {\mathrm{Res}}
		  \mathrm{tr}_{\Hi_\pm}(A^nK^{2\mu}|D|^{-z}) &=
		  \underset{z=2|\mu|}
		  {\mathrm{Res}}
		  \mathrm{tr}_{\Hi_\pm}({\Mq}^nK^{2\mu}|D|^{-z})\\
		  &=\underset{z=2|\mu|}
		  {\mathrm{Res}}
		  \mathrm{tr}_{\Hi_\pm}({\Lq}^{2n}K^{2(\mu+n)}|D|^{-z}).
\end{align*}
The result now reduces to Lemma~\ref{L137}.
\end{proof}

We remark here that the table in the introduction is obtained by comparing 
the values of the twisted traces $\int_{[1]}$ 
and $\int_{[x_0]}$ from \cite{uliarab} 
on the basis vectors $A^nB^m$ 
and  $A^nB^{*m}$ 
with the residues of the last proposition. 

\subsection{Proof of Theorem~\ref{main}}\label{beweis} 
Theorem~\ref{main} is an easy consequence of Proposition \ref{reslemma}. 
As explained e.g.~in \cite{sw}, the operator 
$$
		  \gamma a_0[D,a_1][D,a_2],\quad
		  a_0,a_1,a_2 \in \A
$$
acts by multiplication with
$$
		  a_0(a_1 \triangleleft E)(a_2
		  \triangleleft F) \in \A
$$
on $\Hi_+$ and by multiplication with
$$
		  -a_0(a_1 \triangleleft F)(a_2
		  \triangleleft E) \in \A
$$
on $\Hi_-$. Here $\triangleleft$ denotes
the standard right action of $\U \subset \B^\circ$ on $\B$
given by 
$$
		  a \triangleleft f:=f(a_{(1)})f_{(2)}
$$
Let us point out that unlike the left action $f \triangleright
a:=a_{(1)}f(a_{2})$, this right action does
not leave $\A \subset \B$ invariant, but the
products $(a_1 \triangleleft E)(a_2
\triangleleft F)$ and $(a_1
\triangleleft F)(a_2 \triangleleft E)$
belong to $\A$ again.

By the definition of $\triangleleft$ we have
$$
		  \varepsilon (a_0(a_1 \triangleleft E)(a_2
		  \triangleleft F))=\varepsilon
		  (a_0) E(a_1) F(a_2)
$$
and 
$$
		  \varepsilon (a_0(a_1 \triangleleft F)(a_2
		  \triangleleft E))=
		  \varepsilon (a_0) F(a_1) E(a_2),
$$ 
and evaluation on an arbitrary cycle
representing the fundamental Hochschild
class in $H_2(\A,{}_\sigma \A)$ (see the
proof of the nontriviality of
(\ref{surpris}) in \cite{uliarab}) shows
that the two functionals on
$H_2(\A,{}_\sigma \A)$ induced by these
functionals on
$\A^{\otimes 3}$ coincide up to a factor
of $-q^{-2}$ (see also \cite{uliplural}, where
we carry this computation out with the
help of the 
computer algebra system SINGULAR:PLURAL).

Thus, by Proposition~\ref{reslemma}, 
the cocycle
\begin{align*}
\varphi
 (a_0,a_1,a_2):=&\; 
\underset{z=2}{\mathrm{Res}}\,
 		  \mathrm{tr}_\Hi(\gamma
		  a_0[D,a_1][D,a_2]K^{-2}|D|^{-z})\\
=&\;  \mbox{$\frac{q-q^{-1}}{\ln (q)}$} \hs \varepsilon (a_0) \big(E(a_1)F(a_2) -  F(a_1)E(a_2)\big)
\end{align*}
is cohomologous to  $\frac{q-q^{-3}}{\ln (q)}\hs \tilde \varphi$, where $\tilde \varphi$ denotes 
the fundamental cocycle from \eqref{surpris}. 
This finishes the proof of
Theorem~\ref{main}. \hfill$\Box$


\begin{thebibliography}{99}
\bibitem{cp} Partha
		  Sarathi Chakraborty, Arupkumar
		  Pal, 
		  \emph{On equivariant Dirac
		  operators for ${\SU}_q(2)$}. 
		  Proc. Indian
		  Acad. Sci. Math. Sci.  {\bf 116}  (2006),  no. 4, 531-541. 
\bibitem{cherednik} Ivan Cherednik, 
\emph{On $q$-analogues of Riemann's zeta function}.
Selecta Math. (N.S.) {\bf 7} (2001), no. 4, 447-491.
\bibitem{connes1} Alain Connes, Henri
		  Moscovici, \emph{Type III and
		  spectral triples}.  Traces in number theory, geometry and quantum fields,  57--71, Aspects Math., E38, Friedr. Vieweg, Wiesbaden, 2008.
\bibitem{connesmosco} Alain Connes,
		  Henri Moscovici, \emph{The local
		  index formula in noncommutative
		  geometry}. 
		  Geom. Funct. Anal.  {\bf 5}  (1995),  no. 2, 174-243. 
\bibitem{triesteCP2} Francesco D'Andrea,
		  Ludwik D\pa browski, Giovanni Landi, \emph{The
		  noncommutative geometry of the
		  quantum projective plane}.
		  Rev. Math. Phys.  
		  {\bf 20}  (2008),  no. 8,
		  979-1006.
\bibitem{trieste} Ludwik D\pa browski, \emph{The local index formula for quantum ${\SU}(2)$}.  
Traces in number theory, geometry and quantum fields,  99-110, Aspects Math., E38, Friedr. Vieweg, Wiesbaden, 2008.
\bibitem{DDLW} Ludwik D\pa browski,
		  Francesco D'Andrea, Giovanni
		  Landi, Elmar Wagner,
		  \emph{Dirac operators on all
		  Podle\'s quantum spheres}.
		  J. Noncommut. Geom.  {\bf 1}
		  (2007),  no. 2, 213-239.
\bibitem{ds} Ludwik D\pa browski,
		  Andrzej Sitarz, \emph{Dirac operator on the standard Podle\'s quantum sphere}.  
Noncommutative geometry and quantum groups (Warsaw, 2001),  49-58, 
Banach Center Publ., 61, Polish Acad. Sci., Warsaw, 2003. 
\bibitem{tom} Tom Hadfield,
		  \emph{Twisted cyclic homology of
		  all Podle\'s quantum spheres}.  J. Geom. Phys. {\bf 57}  (2007),  no. 2, 339-351.
\bibitem{higson} Nigel Higson, 
\emph{Meromorphic continuation of zeta
		  functions associated to elliptic
		  operators}. 
Operator algebras, quantization, and noncommutative geometry, 129--142,
Contemp. Math., 365, Amer. Math. Soc., Providence, RI, 2004. 
\bibitem{chef} Anatoli Klimyk, Konrad
		  Schm\"udgen, \emph{Quantum groups
		  and their representations}. Texts
		  and Monographs in
		  Physics. Springer-Verlag,
		  Berlin, 1997. 
\bibitem{uli} Ulrich Kr\"ahmer, 
\emph{Dirac operators on quantum flag manifolds}.
Lett. Math. Phys. {\bf 67} (2004),
		  no. 1, 49-59.
\bibitem{uliisrael} Ulrich Kr\"ahmer, 
\emph{On the Hochschild (co)homology of
		  Quantum Homogeneous Spaces}.  arXiv:0806.0267
\bibitem{uliarab} Ulrich Kr\"ahmer, 
\emph{The Hochschild cohomology ring of
		  the standard Podle\'s quantum
		  sphere}.
		  Arab. J. Sci. Eng. Sect. C Theme
		  Issues {\bf 33}  (2008),  no. 2, 325-335. 
\bibitem{uliplural} http://www.maths.gla.ac.uk/\~{}ukraehmer/podneu.sin
\bibitem{majid} Shahn Majid, Ivan Toma\v
		  si\'c, \emph{On braided zeta
		  functions}, arXiv:1007.5084
\bibitem{mnw} Tetsuya Masuda, Yoshiomi
		  Nakagami, Junsei Watanabe,
		  \emph{Noncommutative
		  differential geometry on the
		  quantum two sphere of Podle\'s
		  I. An algebraic viewpoint}.  
		  $K$-Theory {\bf 5}  (1991),  no. 2, 151-175.
\bibitem{nt} Sergey Neshveyev,  
		  Lars Tuset,
 \emph{A local index formula for
		  the quantum sphere}.
		  Comm. Math. Phys.  {\bf 254}  (2005),
		  no. 2, 323-341.  
\bibitem{voigt} Ryszard Nest, Christian
		  Voigt, \emph{Equivariant
		  Poincar\'e duality for quantum
		  group actions}.
		  J. Funct. Anal. {\bf 258}  (2010),  no. 5, 1466-1503. 
\bibitem{Po} Piotr Podle\'s,
		  \emph{Quantum spheres}.
		  Lett. Math. Phys. {\bf 14}  (1987),  no. 3, 193-202.
\bibitem{sw} Konrad Schm\"udgen, Elmar
		  Wagner, \emph{Dirac operator and
		  a twisted cyclic cocycle on the
		  standard Podle\'s quantum sphere}.
		  J. Reine Angew. Math. {\bf 574}  (2004), 219-235. 
\bibitem{plural} http://www.singular.uni-kl.de
\bibitem{SitarzBCP} Andrzej Sitarz,
		  \emph{Equivariant 
spectral triples}. Noncommutative
		  geometry and quantum groups
		  (Warsaw, 2001),  231-263, Banach
		  Center Publ., 61, Polish Acad. Sci., Warsaw, 2003. 
\bibitem{ueno} Kimio Ueno, Michitomo
		  Nishizawa, \emph{Quantum groups
		  and zeta-functions}.  Quantum groups (Karpacz, 1994),  115-126, PWN, Warsaw, 1995.
\bibitem{W}  Elmar Wagner, \emph{On the noncommutative spin geometry 
             of the standard Podles sphere and index computations}. 
             J. Geom. Phys. {\bf 59} (2009), 998-1016.\\

\end{thebibliography}
\end{document}